\newcommand{\C}{\mathbb{C}}
\newcommand{\Z}{\mathbb{Z}}
\newcommand{\R}{\mathbb{R}}
\newcommand{\ep}{\epsilon}
\newtheorem{theorem}{Theorem}[section]
\newtheorem{proposition}[theorem]{Proposition}
\newtheorem{rem}[theorem]{Remark}
\newtheorem{exam}[theorem]{Example}
\title{Topology of certain quotient spaces of Stiefel manifolds}
\author{Samik Basu}
\email{samik.basu2@gmail.com; samik@rkmvu.ac.in}
\address{Department of Mathematics,
 Vivekananda University,
 Belur, Howrah 711202,
West Bengal, India.}
\author{B. Subhash}
\email{subhash02@gmail.com;subhash@iisertirupati.ac.in}
\address{Indian Institute of Science Education and research Tirupati  
Mangalam, Tirupati -517507. 
Andra Pradesh, INDIA }
\begin{document}

\maketitle

\begin{abstract}
We compute the cohomology of the right generalised projective Stiefel manifolds and use it to find bounds on the rank of the complementary bundle for certain vector bundles. Further the cohomology computations are also used to find bounds on the span and on the immersibility of the manifolds in certain cases.
\end{abstract}

\section{Introduction}
The study of Stiefel manifolds and their quotients have a long history. Their topology have played a fundamental role in solving many problems such as the celebrated solution of the vector field problem on the sphere by Adams. The cohomology of projective Stiefel manifolds over the field of real numbers was computed in \cite{bau-brow} and \cite{git-han}. These were used to prove immersion results for real projective spaces in \cite{git}.

In the complex case, projective Stiefel manifolds were studied in \cite{coh}, their cohomology was computed and a universal property was associated to these manifolds. As consequences the authors conclude non existence of certain sections to appropriate bundles over projective spaces and lens spaces. Following this the question of parallelizability of projective Stiefel manifolds was considered in \cite{par}.

This paper deals with right generalised projective Stiefel manifolds which were studied in \cite{Gon-Su}. These are obtained as quotients of Stiefel manifolds $W_{n,k}$  (the space of $k$ orthonormal frames in $\C^n$) under an action of the circle group $S^1$. The action is given by the formula $z\cdot (v_1,\cdots, v_k)\mapsto (z^{l_1}v_1, \cdots,z^{l_k}v_k)$ for a fixed primitive tuple $(l_1,\cdots, l_k)\in \Z^k$. The corresponding quotient space is called a right generalised projective Stiefel manifold $P_\ell W_{n,k}$. This can be realised as the homogeneous space $U(n)/S^1\times U(n-k)$.

In this paper we are motivated by the computations for projective Stiefel manifolds and attempt to search similar relations for the right generalised projective Stiefel manifolds. We compute the cohomology of these manifolds (cf. Theorem \ref{cohp}) and observe that they carry a universal property for complementary bundles of a certain sum of line bundles (cf. Theorem \ref{univ}). Using these theorems we deduce some implications for line bundles over projective and lens spaces. 

The vector field problem on Stiefel manifolds and their quotients has been a subject of considerable interest. For the projective Stiefel manifold over $\R$ a number of results and conjectures are mentioned in \cite{kor-zv}. We approach this question for right generalised Stiefel manifolds by computing Pontrjagin classes. A similar argument also yields results on non-immersibility in certain Euclidean spaces (cf. Theorem \ref{span}, Theorem \ref{imm}). Our methods improve the results in \cite{Gon-Su} where the  parallelizability question is settled and the span is related to the stable span.   

The paper is organised as follows. In section 2, we describe the cohomology of the manifolds $P_\ell W_{n,k}$ and introduce an universal property of these spaces. In section 3, we consider applications to the rank of complimentary bundles. In section 4, we obtain applications to the vector field problem and the immersion problem in the case that $k=2.$

\section{Some cohomology computations}\label{coh}
In this section we compute the cohomology of the manifolds $P_\ell W_{n,k}$. Our method involves an interplay between Serre spectral sequences of various fibrations and enables us to conclude a universal property for $P_\ell W_{n,k}$. Throughout we assume that the gcd of $(l_1,\cdots ,l_k)$ is $1$.

Recall that the cohomology of the unitary group $U(n)$ is an exterior algebra in generators in degrees $1,3,\cdots, 2n-1$. We denote this expression by  $H^*(U(n)) = \Lambda(y_{1}, \cdots, y_n)$ where the class $y_j$ lies in the degree $2j-1$. The Stiefel manifold $W_{n,k}$ is homeomorphic to $U(n)/U(n-k)$ and its cohomology is given by $\Lambda (y_{n-k+1},\cdots, y_n)$. 

Recall that the principal $S^1$ fibration $W_{n,k} \to P_\ell W_{n,k}$ yields a fibration $W_{n,k}\to P_\ell W_{n,k} \to BS^1$ the latter space being $\C P^\infty$. Note that the Stiefel manifold $W_{n,k}$ also fibres over the flag manifold $F(1,\cdots,1,n-k)$ of sequences of flags of $k$ subspaces with dimensions increasing by $1$ in each step. This fits into a principal $(S^1)^k$ fibration $W_{n,k}\to F(1,\ldots,1,k)$. The $S^1$ action whose orbits are the manifold $P_\ell W_{n,k}$ comes from the inclusion of $S^1$ in $(S^1)^k$ given by $\Phi_\ell: z\mapsto (z^{l_1},\cdots, z^{l_k})$. This induces a commutative sequence of fibrations 
$$\xymatrix{ S^1 \ar[r]^{\Phi_\ell} \ar[d]    & (S^1)^k \ar[d] \\
                       W_{n,k} \ar@{=}[r] \ar[d]         & W_{n,k} \ar[d] \\
                     P_{\ell}W_{n,k} \ar[r] \ar[d]  & F(1,\cdots,1,n-k) \ar[d]\\
                     \C P^{\infty} \ar[r]^{\phi_\ell} & (\C P^{\infty})^k }$$
These fibre sequences extend one step further. Consider $Gr_k(\C^n)$ the Grassmann manifold of $k$ planes in $\C^n$ which is the quotient $U(n)/U(k)\times U(n-k)$. One has a principal $U(k)$ bundle $W_{n,k} \to Gr_k(\C^n)$ and the map $(S^1)^k\to U(k)$ given by the inclusion of diagonal matrices forms a similar diagram of principal fibrations as above. Putting all this together one obtains a commutative diagram 
$$\xymatrix{S^1 \ar[r]\ar[d] & (S^1)^k \ar[r] \ar[d] & U(k) \ar[d] \\ 
                 W_{n,k}\ar@{=}[r] \ar[d]  & W_{n,k}\ar@{=}[r] \ar[d]  & W_{n,k}\ar[d]\\
 P_{\ell}W_{n,k} \ar[r] \ar[d] & F(1,\cdots,1,n-k) \ar[r] \ar[d] & G_k(\C^n)\ar[d]\\
  \C P^{\infty} \ar[r]  & (\C P^{\infty})^k \ar[r]  & BU(k)}$$
In the diagram above the bottom left square and the bottom right squares are pullback squares of fibrations. Hence the composite 
\begin{equation}\label{comm1}
\xymatrix{ P_{\ell}W_{n,k} \ar[r] \ar[d]  & G_k(\C^n)\ar[d]\\
  \C P^{\infty} \ar[r]   & BU(k)}
\end{equation}
is also a pullback. These fibrations induce Serre spectral sequences 
\begin{equation}\label{ssgr}
E_2^{p,q} = H^p(BU(k))\otimes H^q(W_{n,k}) \implies H^{p+q}(G_k(\C^n))
\end{equation}
\begin{equation}\label{ssst}
E_2^{p,q} = H^p (\C P^\infty) \otimes H^q(W_{n,k}) \implies H^{p+q}(P_\ell W_{n,k})
\end{equation}
The pullback diagram (\ref{comm1}) induces a map between the two spectral sequences that commutes with the differentials. Recall that the cohomology of $BU(k)$ is a polynomial algebra on the classes $c_1,\cdots, c_k$ where $c_i=c_i(\xi_k)$, $\xi_k$ being the universal $k$-plane bundle. 

\begin{proposition}\label{diffgr}
In the spectral sequence (\ref{ssgr}) the classes $y_j\in H^*(W_{n,k})$ are transgressive and support the differential $d(y_j)=-c_j'$ (the classes $c_j'$ satisfy the equation $(1+c_1'+\cdots )(1+c_1+ \cdots + c_k)=1$).  
\end{proposition}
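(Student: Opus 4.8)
The plan is to read off the differential by comparing the spectral sequence (\ref{ssgr}) with the Serre spectral sequence of the universal unitary bundle, where the transgression is classical. First I recall that in the fibration $U(n)\to EU(n)\to BU(n)$ the exterior generators are transgressive with $\tau(y_j)=c_j$ (in the normalisation of the $y_j$ used here), this being forced by the contractibility of $EU(n)$. The fibration $W_{n,k}\to Gr_k(\C^n)\to BU(k)$ underlying (\ref{ssgr}) receives a map of fibrations from
$$U(n)\longrightarrow Gr_k(\C^n)\longrightarrow BU(k)\times BU(n-k),$$
the fibration associated to the principal $U(k)\times U(n-k)$-bundle $U(n)\to Gr_k(\C^n)$. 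This latter fibration is the pullback of the universal $U(n)$-bundle along the map $m\colon BU(k)\times BU(n-k)\to BU(n)$ induced by the block inclusion $U(k)\times U(n-k)\hookrightarrow U(n)$. Since $m^*c_j=\sum_{a+b=j}c_ac_b''$ by the Whitney formula (writing $c_a$ and $c_b''$ for the Chern classes of the two factors), naturality of the transgression shows that in this pulled-back fibration the $y_j$ are transgressive with $\tau(y_j)=\sum_{a+b=j}c_ac_b''$.

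The comparison map is induced by the group projection $U(k)\times U(n-k)\to U(k)$: on base spaces it is $\mathrm{pr}_1\colon BU(k)\times BU(n-k)\to BU(k)$, and on fibres it is the quotient $U(n)\to U(n)/U(n-k)=W_{n,k}$, which on cohomology carries $y_j$ to $y_j$ for $n-k<j\le n$. Before invoking naturality I must know that the $y_j$ are already transgressive in (\ref{ssgr}) itself. For the lowest generator $y_{n-k+1}$ this is automatic, since $W_{n,k}$ has vanishing reduced cohomology in degrees $1,\dots,2(n-k)$, so every differential $d_r(y_{n-k+1})$ with $r<2(n-k+1)$ has trivial target. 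For the remaining generators I would use that $H^*(Gr_k(\C^n))$ is concentrated in even degrees; hence every class on the fibre axis of odd total degree must die, and, lying in filtration zero, the generators $y_j$ cannot be hit and so must each support a transgression. Borel's transgression theorem packages this into a simple system of transgressive generators.

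Granting transgressivity, naturality of the transgression gives $\mathrm{pr}_1^*\!\big(d(y_j)\big)=\sum_{a+b=j}c_ac_b''$ in the appropriate subquotient of $H^{2j}(BU(k)\times BU(n-k))$, and it remains to recognise the right-hand side as the image under the injective map $\mathrm{pr}_1^*$ of $-c_j'$. On the page $E_{2j}$ the transgressions of the lower generators have already imposed the relations $\sum_{a+b=i}c_ac_b''=0$ for $i<j$; solving these recursively expresses the classes of the second factor as the formal inverse classes, $c_b''=c_b'$ for $b\le n-k$, where $(1+c_1'+\cdots)(1+c_1+\cdots+c_k)=1$, and forces $c_b'=0$ for $n-k<b<j$. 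Substituting these identities and using the inverse relation in degree $2j$, the sum $\sum_{a+b=j}c_ac_b''$ collapses to $-c_j'$, a polynomial in $c_1,\dots,c_k$ alone. Since $\mathrm{pr}_1^*$ is injective this yields $d(y_j)=-c_j'$ for $n-k<j\le n$, as asserted.

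The hard part is not the comparison but the care demanded by its last two steps. Establishing transgressivity of the higher generators is genuine work: the connectivity bound that handles $y_{n-k+1}$ no longer applies, and one must really exploit the even-degree structure of $Gr_k(\C^n)$ (equivalently, Borel's theorem) to exclude the non-transgressive differentials. Equally, the transgression must be tracked honestly through its indeterminacy, since $\tau(y_j)$ is defined only in a subquotient and the identifications $c_b''=c_b'$ and $c_b'=0$ are valid only after passing to the quotient by the lower relations; reconciling this with the requirement that $\mathrm{pr}_1^*(d(y_j))$ lie in the image of $\mathrm{pr}_1^*$ is exactly what pins down the answer. Keeping the global sign consistent between the normalisation $\tau(y_j)=c_j$ and the final formula $d(y_j)=-c_j'$ is the last point to be checked.
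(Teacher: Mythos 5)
Your proposal is correct and follows essentially the same route as the paper: both compare the spectral sequence over $BU(k)$ with that of the fibration $U(n)\to Gr_k(\C^n)\to B(U(k)\times U(n-k))$, where the transgression $d(y_j)=c_j(\xi_k\oplus\xi_{n-k})=\sum_{a+b=j}c_a\tilde c_b$ is known, and then use the lower transgressions to identify the classes $\tilde c_b$ with the formal inverse classes $c_b'$ on later pages, so that the sum collapses to $-c_j'$. Your additional care in establishing transgressivity of the higher generators (via the evenness of $H^*(Gr_k(\C^n))$, i.e.\ Borel's theorem) and in tracking the indeterminacy of the transgression is a tightening of the paper's terser argument rather than a different method.
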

\begin{proof}
In the Serre spectral sequence for the fibration 
$$U(n)\rightarrow Gr_k(\mathbb{C}^n) \rightarrow B(U(k) \times U(n-k))$$ 
the differentials are given by $d(y_i) = c_i(\xi_{k} \oplus \xi_{n-k})$. We have the diagram of fibrations 
\begin{equation}\label{cdiffgr}
\xymatrix{U(n)\ar[r]^{q} \ar[d] & W_{n,k}\ar[d]\\
G_k(\mathbb{C}^n) \ar@{=}[r] \ar[d] & G_k(\mathbb{C}^n)\ar[d]\\
B(U(k) \times U(n-k)) \ar[r] & BU(k)}
\end{equation}
and hence a morphism of the associated Serre spectral sequences. Note that for $j\geq n-k+1$ the classes $y_j\in H^{2j-1}(W_{n,k})$ pullback to $y_j \in H^{2j+1}(U(n))$. We try to read off the expression for $d(y_j)$ in the spectral sequence for the right column from the one on the left. 

Denote by $E_*^{p,q}(l)$ the spectral sequence corresponding to the left vertical column in (\ref{cdiffgr}). The classes $y_1,\cdots,y_{n-k}$ are not in the image of $q^*$. Let $c_i=c_i(\xi_k)$ and $\tilde{c}_i=c_i(\xi_{n-k})$. The formula $d(y_j)=c_j(\xi_k\oplus \xi_{n-k})$ imply that in the page $E_{2(n-k)+1}(l)$, $\tilde{c}_i$ is equivalent to $c_i'$ for $i\leq n-k$. Hence 
$$d_{2(n-k)+2}(y_{n-k+1})=c_{n-k+1}(\xi_k \oplus \xi_{n-k}) =\sum_{i+j=n-k+1} c_ic_j'=-c_{n-k+1}'$$
in $E_{2(n-k)+2}(l)$. 

We observe that the equation $d_{2j}(y_j)= -c_j'$ holds for all $j\geq n-k+1$ in the page $E_{2j}(l)$. Proceeding by induction we have in the page $E_{2j-1}(l)$, $\tilde{c}_i=c_i'$ for $i\leq n-k$ and $c_i' = 0$ for $n-k+1\leq i\leq j-1$. Then in the page $E_{2j}(l)$ we have the equation
$$ \begin{array}{lll}
    d(y_{2j -1})
    &=& c_j(\xi_k\oplus \xi_{n-k}) \\
    &=& \displaystyle{\sum_{p+q=j}} c_p\tilde{c}_q\\
    &=& \displaystyle{\sum_{p+q=j,~p\leq k,~ q\leq n-k}}c_p c_q'\\
    &=& \displaystyle{\sum_{p+q=j,~p\leq k}} c_pc_q'\\
    &=& -c_j'\\
   \end{array}$$

Denote by $E_*^{p,q}(r)$ the spectral sequence for the right column of (\ref{cdiffgr}). For degree reasons the differentials $d_j$ are $0$ if $j<2(n-k)+2$. The morphism from the spectral sequence of the right column to the left column implies that the differentials on $y_j$ for $j>n-k$ are given by $d(y_i) =- c_i^{\prime}$.
\end{proof}

Next we translate the Proposition \ref{diffgr} to obtain differentials in the spectral sequence (\ref{ssst}). For a tuple $\ell=(l_1,\cdots,l_k)$ and integers $I=(i_1,\ldots,i_k)$ denote $|I|=\displaystyle{\sum_j} i_j$ and $\ell^I=\displaystyle{\prod_j} l_j^{i_j}$. We prove
\begin{proposition}\label{diffst}
In the spectral sequence (\ref{ssst}) the classes $y_j$ (for $j>n-k$) are transgressive and the differentials are given by $d(y_j)=-\displaystyle{\sum_{| I | = j}}  (-1)^j \ell^I x^j$.
\end{proposition}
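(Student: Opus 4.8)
The plan is to deduce the differentials in (\ref{ssst}) from those in (\ref{ssgr}) by naturality, using the morphism of spectral sequences coming from the pullback square (\ref{comm1}). Since $P_\ell W_{n,k}$ is the pullback of $G_k(\C^n)\to BU(k)$ along the bottom map $\phi\colon \C P^\infty \to BU(k)$, and the resulting map of fibrations is the identity on the common fibre $W_{n,k}$, there is an induced morphism from (\ref{ssgr}) to (\ref{ssst}) which on $E_2$ is $\phi^*\otimes \mathrm{id}$ and commutes with all differentials. In particular $1\otimes y_j$ is carried to $1\otimes y_j$, so the fact that $y_j$ is transgressive with $d(y_j)=-c_j'$ in (\ref{ssgr}) (Proposition \ref{diffgr}) forces $y_j$ to be transgressive in (\ref{ssst}) as well, with $d(y_j)=-\phi^*(c_j')$. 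Everything then reduces to computing $\phi^*(c_j')\in H^*(\C P^\infty)$.

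To compute $\phi^*$ I would factor it through the maximal torus, namely as the composite $\C P^\infty \xrightarrow{\phi_\ell} (\C P^\infty)^k \xrightarrow{\psi} BU(k)$ appearing in the bottom row of the large diagram. The map $\psi$ is induced by the inclusion of diagonal matrices $(S^1)^k\to U(k)$, so on cohomology $\psi^*(c_i)$ is the $i$-th elementary symmetric polynomial in the generators $x_1,\ldots,x_k$ of $H^*((\C P^\infty)^k)$; equivalently $\psi^*(1+c_1+\cdots+c_k)=\prod_{i=1}^k(1+x_i)$. The defining relation $(1+c_1'+\cdots)(1+c_1+\cdots+c_k)=1$ then gives $\psi^*(\sum_j c_j')=\prod_{i=1}^k(1+x_i)^{-1}=\prod_i\sum_{m\geq 0}(-1)^m x_i^m$, whose degree $2j$ component is $\psi^*(c_j')=(-1)^j\sum_{|I|=j}x^I$.

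Finally, the map $\phi_\ell$ comes from $\Phi_\ell\colon z\mapsto(z^{l_1},\ldots,z^{l_k})$, so $\phi_\ell^*(x_i)=l_i x$ and hence $\phi_\ell^*(x^I)=\ell^I x^{|I|}$. Composing the two computations gives $\phi^*(c_j')=(-1)^j\sum_{|I|=j}\ell^I x^j$, and substituting into $d(y_j)=-\phi^*(c_j')$ produces exactly the stated formula $d(y_j)=-\sum_{|I|=j}(-1)^j\ell^I x^j$.

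I expect the only genuine subtlety to be the bookkeeping: verifying that the morphism of spectral sequences really sends $1\otimes y_j$ to $1\otimes y_j$ (so that both transgressivity and the value of the differential transfer), and tracking the sign $(-1)^j$ through the inversion of the total Chern class. The computation of $\phi^*$ itself is routine once the factorization through the torus is in place and $c_j'$ is recognized as the degree $2j$ part of $\prod_i(1+x_i)^{-1}$.
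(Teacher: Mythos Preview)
Your proposal is correct and follows essentially the same route as the paper: both use naturality of the spectral sequence morphism induced by the pullback square (\ref{comm1}) to reduce to computing $\phi^*(c_j')$, and both obtain $\phi^*(c_j')=(-1)^j\sum_{|I|=j}\ell^I x^j$. The only cosmetic difference is that the paper identifies the bottom map as classifying $\bigoplus_j \xi^{l_j}$ and reads off $c(\bigoplus_j \xi^{l_j})=\prod_j(1+l_jx)$ directly, whereas you make the factorization through the maximal torus explicit; these are the same computation, and your version spells out the naturality step that the paper leaves to ``the result follows.''
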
 

\begin{proof}
In the diagram (\ref{comm1}) the map $\phi_\ell: \C P^\infty \to BU(k)$ classifies the $k$-plane bundle $\xi^{l_1}\oplus \cdots \oplus \xi^{l_k}$. The chern classes of this bundle are computed by 
$$c(\oplus_j \xi^{l_j})=\prod_j (1+l_j x)$$
For the classes $c_j'$ define $c'=1+c_1'+\cdots$ so that $cc'=1$. This implies the pullback of $c'$ to $\C P^\infty$ is given by the equation
$$\phi_\ell^*c' = \prod_j (1+l_j x)^{-1}$$
Thus $\phi_\ell^*(c_j')= \displaystyle{\sum_{| I | = j}}  (-1)^j \ell^I x^j$. The result follows.
\end{proof}

Using the formulas above we may compute the cohomology of $P_\ell W_{n,k}$. 
\begin{theorem}\label{cohp}
 For an odd prime $p$, 
$$H^*(P_\ell W_{n,k};\Z/p)\cong  \Z/p[x]/(x^N) \otimes \Lambda (y_{n-k+1},\cdots,y_{N-1},y_{N+1},\cdots,y_n),$$
where $N= min_{r>n-k}\{\displaystyle{\sum_{| I | = r}}  \ell^I \neq 0 (mod~p)\}$.
\end{theorem}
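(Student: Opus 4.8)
The plan is to run the Serre spectral sequence (\ref{ssst}), whose $E_2$-page is the bigraded algebra $\Z/p[x]\otimes \Lambda(y_{n-k+1},\cdots,y_n)$ with $x$ in bidegree $(2,0)$ and $y_j$ in bidegree $(0,2j-1)$, and to read off $E_\infty$ using the transgressions supplied by Proposition \ref{diffst}. Writing $s_r=\sum_{|I|=r}\ell^I$ for the integer coefficient appearing there, the differential on each transgressive generator is $d(y_j)=\pm s_j\,x^j$, so modulo $p$ its behaviour is governed entirely by whether $s_j\equiv 0$. By the very definition of $N$ (which lies in $\{n-k+1,\dots,n\}$, so that $y_N$ is one of the exterior generators) we have $s_j\equiv 0 \pmod p$ for every $j$ with $n-k<j<N$, while $s_N\not\equiv 0 \pmod p$. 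Hence the generators $y_{n-k+1},\dots,y_{N-1}$ are permanent cycles, their only possibly nonzero differential being the vanishing transgression, and the first nontrivial differential of the whole spectral sequence is $d_{2N}$.

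First I would analyse this initial differential $d_{2N}(y_N)=u\,x^N$ with $u\in (\Z/p)^\times$. Since $x$ is a permanent cycle coming from the base and $y_N$ is exterior, the Leibniz rule gives $d_{2N}(x^m y_N)=u\,x^{m+N}$ for all $m\ge 0$. Thus $d_{2N}$ restricted to the subalgebra $\Z/p[x]\otimes\Lambda(y_N)$ carries $x^m y_N$ isomorphically onto $x^{m+N}$; its homology is precisely the truncated polynomial algebra $\Z/p[x]/(x^N)$, the classes $1,x,\dots,x^{N-1}$ surviving while every power $x^m$ with $m\ge N$ becomes a boundary. Tensoring with the exterior factor on the remaining generators, the $E_{2N+1}$-page is $\Z/p[x]/(x^N)\otimes\Lambda(y_{n-k+1},\dots,y_{N-1},y_{N+1},\dots,y_n)$.

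Next I would show the spectral sequence collapses here. On $E_{2N+1}$ one has $x^j=0$ for all $j\ge N$, so for each remaining generator $y_j$ with $j>N$ its transgression $\pm s_j x^j$ is already zero; being transgressive it then supports no nonzero differential, and lying in filtration $0$ it receives none. By the Leibniz rule the same holds for all products, so $E_{2N+1}=E_\infty$, yielding the claimed associated graded. It remains to settle the algebra extension. Over the field $\Z/p$ the additive structure is that of $E_\infty$; since $p$ is odd and each $y_j$ has odd degree, graded commutativity forces $y_j^2=0$, so the exterior relations lift. Finally, $x$ is the image under the edge homomorphism of the polynomial generator of $H^*(\C P^\infty;\Z/p)$, and the relation $d_{2N}(y_N)=u\,x^N$ exhibits $x^N$ as a boundary, whence its image $x^N=0$ in $H^*(P_\ell W_{n,k};\Z/p)$, while $x^{N-1}$ survives to $E_\infty$ and is therefore nonzero. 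This identifies the algebra as stated.

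The main obstacle I anticipate is not the bookkeeping of the transgressions but the passage from $E_\infty$ to the ring $H^*(P_\ell W_{n,k};\Z/p)$: one must be sure there are no multiplicative extensions hidden among the even-degree classes, in particular that $x^N$ vanishes exactly and that no smaller power does. The edge-homomorphism argument above, together with the vanishing of odd-degree squares at an odd prime, is what closes this gap; the verification that the listed monomials are linearly independent then follows from a dimension count against $E_\infty$.
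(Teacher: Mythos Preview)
Your argument is correct and follows essentially the same route as the paper: run the Serre spectral sequence (\ref{ssst}) using the transgressions from Proposition \ref{diffst}, identify $d_{2N}$ as the first nonzero differential, compute $E_{2N+1}=E_\infty$, and then resolve the multiplicative extensions using that $p$ is odd (so $y_j^2=0$) together with the edge homomorphism for the truncated polynomial part. The only cosmetic difference is that the paper phrases the extension step as building a ring map from the model algebra into $H^*(P_\ell W_{n,k};\Z/p)$ and checking it is an additive isomorphism, whereas you verify the relations $x^N=0$ and $y_j^2=0$ directly; both amount to the same thing.
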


\begin{proof}
We compute via the Serre spectral sequence (\ref{ssst}) with $\Z/p$ coefficients whose differentials are computed in Proposition \ref{diffst}. 

By the multiplicative structure, the first non-zero differential on a class in the vertical $0$-line is forced to be a transgression. With $N$ defined as in the statement note that the first non-zero transgression is given by $d_{2N}(y_N)=x^N$. Therefore the page $E_{2N+1}^{*,*}$ is isomorphic to the algebra 
$$\Z/p[x]/(x^N) \otimes \Lambda (y_{n-k+1},\cdots,y_{N-1},y_{N+1},\cdots,y_n).$$
Since the classes $y_j$ are transgressive there are no further differentials as $x^i=0$ for $i>N$ in $E_{2N+1}^{*,*}$. Hence $E_{2N+1}=E_\infty$. It follows that additively we must have 
$$H^*(P_\ell W_{n,k};\Z/p)\cong  \Z/p[x]/(x^N) \otimes \Lambda (y_{n-k+1},\cdots,y_{N-1},y_{N+1},\cdots,y_n)$$

To recover the multiplicative structure, observe that the factor $\Z/p[x]/(x^N)$ is a subalgebra as it comes from the horizontal $0$-line. Arbitrarily pick classes $y_j\in H^{2j-1}(P_\ell W_{n,k};\Z/p)$ (for $j>n-k$ and $j\neq N$) which pulls back to $y_j \in H^{2j-1}(W_{n,k};\Z/p)$ under the quotient map $W_{n,k}\to P_\ell W_{n,k}$. This exists by the additive computation above. The classes $y_j$ are odd dimensional classes and hence square to $0$. Thus multiplication induces a ring map 
$$ \Z/p[x]/(x^N) \otimes \Lambda (y_{n-k+1},\cdots,y_{N-1},y_{N+1},\cdots,y_n) \to H^*(P_\ell W_{n,k};\Z/p)$$
which is an additive isomorphism by the argument above. The result follows.
 \end{proof}

One may try to repeat the above argument for $p=2$ but then the squares on the classes $y_j$ might not be zero. However, if $k=2$ this case cannot arise and we have the following result
\begin{theorem}\label{coh2}
Let $\ell=(l_1,l_2)$ and suppose that  $2$ divides $\displaystyle{\sum_{p+q=n-1}} l_1^pl_2^q$. Then  
$$H^*(P_\ell W_{n,2};\Z/2)\cong  \Z/2[x,y_{n-1}]/(x^n, y_{n-1}^2).$$
Otherwise,
$$H^*(P_\ell W_{n,2};\Z/2)\cong  \Z/2[x,y_n]/(x^{n-1}, y_n^2).$$
\end{theorem}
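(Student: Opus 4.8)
The plan is to run the mod $2$ Serre spectral sequence (\ref{ssst}), whose $E_2$-page is $\Z/2[x]\otimes \Lambda(y_{n-1},y_n)$ with $x$ in degree $2$ and $y_{n-1},y_n$ in degrees $2n-3$ and $2n-1$. Specializing Proposition \ref{diffst} to $k=2$ and to $\Z/2$ coefficients (where the signs $(-1)^j$ are irrelevant), the only possibly non-zero differentials are the transgressions $d_{2j}(y_j)=S_j\, x^j$ for $j=n-1,n$, where I abbreviate $S_j=\sum_{p+q=j} l_1^p l_2^q$. Thus the entire computation is governed by the parities of the two integers $S_{n-1}$ and $S_n$.

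First I would establish the key parity dichotomy. Since $\gcd(l_1,l_2)=1$, at least one of $l_1,l_2$ is odd. Using the recurrence $S_j=(l_1+l_2)S_{j-1}-l_1l_2 S_{j-2}$ with $S_0=1$ and $S_1=l_1+l_2$, a short mod $2$ analysis shows: if exactly one of $l_1,l_2$ is even then $S_j\equiv 1$ for every $j$, while if both are odd then $S_j\equiv j+1 \pmod 2$. In particular $S_{n-1}$ is even precisely when $l_1,l_2$ are both odd and $n$ is even, and in that situation $S_n\equiv n+1\equiv 1$ is odd. Hence $S_{n-1}$ and $S_n$ are never simultaneously even, which is exactly the dichotomy in the statement.

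With this in hand I would split into the two cases. If $2$ divides $S_{n-1}$ (so $S_n$ is odd), then $d_{2n-2}(y_{n-1})=0$, so $y_{n-1}$ is a permanent cycle, and the first non-trivial differential is the transgression $d_{2n}(y_n)=x^n$; computing the homology of the resulting differential on $\Z/2[x]\otimes\Lambda(y_n)$ and tensoring with $\Lambda(y_{n-1})$ yields $E_\infty\cong \Z/2[x]/(x^n)\otimes\Lambda(y_{n-1})$. If instead $S_{n-1}$ is odd, then $d_{2n-2}(y_{n-1})=x^{n-1}$ truncates the base to $\Z/2[x]/(x^{n-1})$; since $x^n$ already vanishes on this page, the later transgression $d_{2n}(y_n)=S_n x^n$ is automatically zero, so $y_n$ survives and $E_\infty\cong \Z/2[x]/(x^{n-1})\otimes\Lambda(y_n)$. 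In either case one checks, by tracking bidegrees, that these are the only non-zero differentials, and over the field $\Z/2$ the $E_\infty$-page determines $H^*$ additively.

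The only genuine difficulty is the multiplicative structure, specifically the vanishing of the square of the exterior generator; this is the step that fails for general $p=2$ and is the reason the theorem is restricted to $k=2$. As in the proof of Theorem \ref{cohp}, the truncated polynomial factor is a subalgebra coming from the base, and I would choose the generator $y$ (namely $y_n$ or $y_{n-1}$) pulling back to the Stiefel generator under $W_{n,2}\to P_\ell W_{n,2}$. To see $y^2=0$ I would use that $P_\ell W_{n,2}=U(n)/(S^1\times U(n-2))$ is a closed manifold of dimension $4n-5$. In the case $E_\infty\cong \Z/2[x]/(x^{n-1})\otimes\Lambda(y_n)$, the class $y_n^2$ sits in degree $4n-2>4n-5$ and so vanishes for dimensional reasons. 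In the case $E_\infty\cong \Z/2[x]/(x^n)\otimes\Lambda(y_{n-1})$, the class $y_{n-1}^2$ sits in degree $4n-6$, and inspecting the additive basis shows that for $n\ge 3$ there is no monomial in that degree, so again $y_{n-1}^2=0$. This forces the multiplication map to be the claimed ring isomorphism. The main obstacle is precisely this degree bookkeeping, which replaces the graded-commutativity argument available at odd primes.
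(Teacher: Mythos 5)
Your proof is correct and takes essentially the same route as the paper: run the mod $2$ Serre spectral sequence with the transgressions from Proposition \ref{diffst}, then settle the only delicate point --- the vanishing of the square of the surviving exterior generator --- by checking that no nonzero class exists in degree $4n-6$ (resp.\ $4n-2$) of the additive answer. Your explicit parity lemma showing that $\sum_{p+q=n-1}l_1^pl_2^q$ and $\sum_{p+q=n}l_1^pl_2^q$ are never simultaneously even is left implicit in the paper but is exactly the fact needed to justify its two-case dichotomy, so it is a worthwhile addition rather than a deviation.
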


\begin{proof}
The proof for Theorem \ref{cohp} may be repeated verbatim here. The only issue is with multiplicative extensions. Again choose representatives for $y_{n-1}, y_n$ in an arbitrary fashion. We examine the possible values for $y_j^2$. From dimension reasons no other class exists in the degree of $y_{n-1}^2$ and $y_n^2$ in either of the cases. The rest of the proof works as in Theorem \ref{cohp}. 
\end{proof}

\begin{exam}
Put $\ell=(1,\cdots,1)$ so that we recover the projective Steifel manifold. In that case note that $\displaystyle{\sum_{| I | = r}}  \ell^I $ is the number of ordered $k$-tuples of elements with sum $r$ which is $\binom{r+k-1}{k}=\binom{r+k-1}{r-1}$. 

Consider $N= \displaystyle{min_{r>n-k}}\{\binom{r+k-1}{r-1} \neq 0 (mod~p)\}$. The first term in this set is $r=n-k+1$ which is $\binom{n}{k}$. In view of the relation $\binom{r+k}{r}=\binom{r+k-1}{r-1}+\binom{r+k-1}{r}$, if $\binom{r+k-1}{r-1} \equiv 0 (mod~p)$, $ \binom{r+k}{r}=\binom{r+k-1}{r} (mod~p)$. Therefore one may rewrite the equation defining $N$ as $N= \displaystyle{min_{r>n-k}}\{\binom{n}{r} \neq 0 (mod~p)\}$. This matches with the cohomology computation in Theorem 1.1 of \cite{coh}.
\end{exam}

Refer to the commutative diagram (\ref{comm1}). This is actually a homotopy pullback. Hence one has an associated universal property for the manifold $P_\ell W_{n,k}$.
\begin{theorem}\label{univ}
The space $P_\ell W_{n,k}$ classifies line bundles $L$ for which there exists an $(n-k)$-bundle $E$ such that $E\oplus_j L^{l_j}$ is a trivial bundle. 
\end{theorem}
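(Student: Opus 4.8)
The plan is to read the statement off directly from the universal properties of the three corner spaces of the square (\ref{comm1}) together with the universal property of a homotopy pullback. Over a paracompact base $X$ (say a CW complex) homotopy classes of maps $X\to \C P^\infty=BU(1)$ are in natural bijection with isomorphism classes of complex line bundles $L$, the universal bundle $\xi$ pulling back to $L$; likewise $BU(k)$ classifies $k$-plane bundles. As recorded in the proof of Proposition \ref{diffst}, the map $\phi_\ell:\C P^\infty\to BU(k)$ is the classifying map of $\bigoplus_j \xi^{l_j}$, so if $f:X\to\C P^\infty$ classifies $L$ then the composite $\phi_\ell\circ f$ classifies $\bigoplus_j L^{l_j}$.

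Next I would recall the universal property of the finite Grassmannian: a map $g:X\to G_k(\C^n)$ is the same datum as a $k$-dimensional subbundle $V\subseteq\underline{\C}^n$ of the trivial $n$-bundle, with $V=g^*\xi_k$, and the right vertical map $G_k(\C^n)\to BU(k)$ (the classifying map of the tautological bundle $\xi_k$) forgets the embedding and remembers only the abstract bundle $V$. Equivalently, a map to $G_k(\C^n)$ is a $k$-plane bundle $V$ together with a splitting $V\oplus E\cong\underline{\C}^n$, where $E$ is the $(n-k)$-dimensional quotient (or orthogonal complement) bundle. Since (\ref{comm1}) is a homotopy pullback, a homotopy class of maps $X\to P_\ell W_{n,k}$ is the same as a triple $(f,g,H)$ with $f:X\to\C P^\infty$, $g:X\to G_k(\C^n)$, and $H$ a homotopy between the two composites into $BU(k)$. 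Feeding the above dictionary into this triple, such data is exactly a line bundle $L$, an $(n-k)$-bundle $E$ with $\bigoplus_j L^{l_j}\oplus E\cong\underline{\C}^n$, and a trivializing identification; conversely, given $L$ and such an $E$, a choice of isomorphism $\bigoplus_j L^{l_j}\oplus E\cong\underline{\C}^n$ exhibits $\bigoplus_j L^{l_j}$ as a subbundle of $\underline{\C}^n$ and thus produces the lift $X\to P_\ell W_{n,k}$. This yields the claimed classification.

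The main obstacle, and essentially the only delicate point, is the careful bookkeeping demanded by the homotopy-pullback universal property: one must track the homotopy $H$ rather than merely the existence of an abstract isomorphism $\bigoplus_j L^{l_j}\cong V$, and check that the resulting correspondence is natural and well defined on homotopy classes. It is also worth emphasizing that $G_k(\C^n)$ classifies only those $k$-plane bundles that embed in $\underline{\C}^n$, i.e. those admitting an $(n-k)$-dimensional complement to the trivial bundle; this is not a defect but precisely the finiteness condition that gives the theorem its content.
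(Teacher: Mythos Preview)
Your proposal is correct and follows essentially the same route as the paper: both arguments read the universal property off from the fact that (\ref{comm1}) is a homotopy pullback, identifying maps into $\C P^\infty$ with line bundles $L$, maps into $G_k(\C^n)$ with $k$-plane bundles equipped with an $(n-k)$-dimensional complement, and the compatibility in $BU(k)$ with the isomorphism $\bigoplus_j L^{l_j}\oplus E\cong \underline{\C}^n$. Your discussion of the homotopy $H$ is in fact slightly more scrupulous than the paper's version, which simply records that the composites are homotopic without tracking the homotopy itself.
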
 
\begin{proof}
Since the diagram (\ref{comm1}) is a homotopy pullback, $[X,P_\ell W_{n,k}]$ is equivalent to a map $X\to \C P^\infty$ and a map $X\to Gr_k(\C^n)$ such that the composites to $BU(k)$ are homotopic. Denote by $L$ the line bundle classified by the map to $\C P^\infty$ and by $E$ the pullback of the complementary canonical bundle $\xi_{n-k}$ over $Gr_k(\C^n)$. Then the maps are homotopic on composition to $BU(k)$ if and only if $\oplus_j L^{l_j} \oplus E = n\ep_\C$. The result follows. 
\end{proof}

\begin{rem}
If $\ell=(1,\cdots,1)$ the universal property classifies line bundles $L$ such that $kL\oplus E$ is a trivial bundle. We have the sequence of implications
$$kL\oplus E \cong n\ep_\C$$
$$\iff L^*\otimes E \otimes k \ep_\C \cong nL^*$$
$$\iff E^*\otimes L \oplus k\ep_\C \cong nL$$
Thus the universal property is equivalent to having $k$ linearly independent sections to the bundle $nL$. This reduces to the universal property in (5.2) of \cite{coh}.
\end{rem}

\section{Ranks of some complementary bundles}
For a vector bundle $\xi$, call a bundle $\eta$ complementary if $\xi \oplus \eta$ is a trivial bundle. In this section we use the computations of section \ref{coh} to deduce some results on the rank of complementary bundles on projective spaces and lens spaces. 

Recall that a complex vector bundle $\xi$ over a finite dimensional manifold $X$ always possesses a complementary bundle $\eta$ of dimension $\geq \frac{dim X}{2}$. In the following we use the spaces $P_\ell W_{n,2}$ to work out some examples concerning ranks of complementary bundles to $L^{l_1}\oplus L^{l_2}$ for fixed line bundles $L$ for $l_1$ and $l_2$ relatively prime. 

For the rest of the paper we fix the notation $\phi_d(l_1,l_2)= \frac{l_1^{d+1}-l_2^{d+1}}{l_1-l_2}$ so that for $\ell=(l_1,l_2)$, $\displaystyle{\sum_{|I|=d}} \ell^I = \phi_d(l_1,l_2)$.

\subsection{Complex Projective spaces}
We use the notation  $\xi$ to denote the canonical line bundle over $\C P^n$ and let $\zeta$ be such that $\xi^{l_1} \oplus \xi^{l_2}\oplus \zeta$ is a  trivial bundle. It follows that the total Chern class of $\zeta$ is $c(\zeta)= (1+l_1 x)^{-1}(1+l_2x)^{-1}$ so that  
$$c_i(\zeta)=x^i\sum_{p+q=i} (-1)^p l_1^p (-1)^ql_2^q =x^i(-1)^i \frac{l_1^{i+1} - l_2 ^{i+1}}{l_1-l_2}= \phi_i(l_1,l_2)x^i.$$ 
If $(l_1,l_2) \neq \pm (1,-1),$ or if $n$ is even, then $c_n(\zeta) \neq 0,$ hence $\mbox{rank}(\zeta)\geq n.$ 
Hence this gives an exact bound. We inspect in the case $l_1=1, l_2=-1$ and $n$ odd, whether there can exist $\zeta$ whose rank is $n-1$. In this case $c_n(\zeta)=0$.

From Theorem \ref{univ}, there exists complementary $\zeta$ of dimension $n-1$ if and only if there is a lift in the diagram 
\begin{equation}\label{lift}
\xymatrix{ & P_\ell W_{n+1,2} \ar[d] \\
\C P^n \ar[r]^{\xi}  & \C P^\infty }
\end{equation}
for $\ell=(l_1,l_2)$. We work our way up the Postnikov tower of $P_\ell W_{n+1,2}$. Observe from the fibre sequence $W_{n+1,2} \to P_\ell W_{n+1,2} \to \C P^\infty$ that the first non-trivial homotopy group of $P_\ell W_{n+1,2}$ is $\pi_2 \cong \Z$ and the first space in the Postnikov tower is $\C P^\infty$. The next stage for the Postnikov tower is
\begin{equation}\label{Post}
\xymatrix{
& P_2 \ar[d] &\\
P_{\ell}W_{n+1,2} \ar[ur]\ar[r] & \C P^{\infty} \ar[r] & K(\mathbb{Z},2n)}
\end{equation}
where $P_2 \to \C P^\infty \to K(\Z,2n)$ is a fibration. The map $P_\ell W_{n+1,2} \to P_2$ is $2n$-connected so that any possible obstruction to lift a map from $\C P^n$ must lie at this stage. Thus it suffices to consider the existence of the dotted arrow in 
 \begin{equation*}
\xymatrix{
& P_2 \ar[d] \\
\C P^n \ar@{-->}[ur]\ar[r]^{\xi} & \C P^{\infty}}
\end{equation*}
The lift exists if and only if the map $\C P^n \to \C P^\infty \to K(\Z,2n)$ is trivial. This is the restriction of the $k$-invariant $\alpha$ in $H^{2n}(\C P^\infty)$ to $\C P^n$.

 We have the commutative diagram of fibrations 
 \begin{equation}\label{D}
 \begin{CD}
W_{n+1,2} @>>> K(\mathbb{Z},2n-1) @>>>K(\mathbb{Z},2n-1)\\
 @VVV   @VVV @VVV\\
 P_{\ell}W_{n+1,2} @>>>P_2 @>>> PK(\mathbb{Z},2n)\\
 @VVV   @VVV @VVV\\
  \C P^{\infty} @=  \C P^{\infty} @>\alpha>>K(\mathbb{Z},2n)
\end{CD}
\end{equation}
In the top horizontal sequence the map $W_{n+1,2}\to K(\Z, 2n-1)$ is given by the generator $y_n \in H^{2n-1}(W_{n+1,2})$. Thus the class $\alpha$ is the transgression on the class $y_n$ which is given by $\phi_n(1,-1)=0$ as $n$ is odd. Thus the obstruction to the lift in the diagram (\ref{lift}) is $0$. Therefore we have proved the Proposition
\begin{proposition}\label{proj}
For $n$ odd, there is a $(n-1)$-bundle $\zeta$ over $\C P^n$ such that $\xi\oplus \xi^{-1} \oplus \zeta$ is trivial. ($\xi$ being the canonical line bundle) 
\end{proposition}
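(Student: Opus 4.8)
The plan is to convert the existence of $\zeta$ into a sectioning problem and then show that the single relevant obstruction is a transgression that vanishes when $n$ is odd. First I would invoke the universal property of Theorem \ref{univ} with $\ell=(1,-1)$ and $k=2$: the space $P_\ell W_{n+1,2}$ classifies line bundles $L$ for which some $(n-1)$-bundle $E$ makes $L\oplus L^{-1}\oplus E$ trivial. Hence finding the desired $\zeta$ over $\C P^n$ is exactly the same as lifting the classifying map $\xi\colon \C P^n\to \C P^\infty$ of the canonical bundle through the fibration $P_\ell W_{n+1,2}\to \C P^\infty$, i.e.\ filling in the dotted arrow of diagram (\ref{lift}).

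Next I would attack this lifting problem by working up the Postnikov tower of $P_\ell W_{n+1,2}$, whose bottom stage is $\C P^\infty$. The fibre of $P_\ell W_{n+1,2}\to \C P^\infty$ is $W_{n+1,2}$, whose reduced cohomology starts in degree $2n-1$ (its cohomology is $\Lambda(y_n,y_{n+1})$), so the space is highly connected and the first possible obstruction to a section appears in degree $2n$. Concretely, the next stage is the fibration $P_2\to \C P^\infty\xrightarrow{\alpha}K(\Z,2n)$ of (\ref{Post}), with $P_\ell W_{n+1,2}\to P_2$ being $2n$-connected. Since $\dim_\R \C P^n=2n$, this connectivity forces every higher stage of the tower to be irrelevant, so the lift of $\xi$ exists if and only if the composite $\C P^n\xrightarrow{\xi}\C P^\infty\xrightarrow{\alpha}K(\Z,2n)$ is nullhomotopic, i.e.\ if and only if $\xi^*\alpha=0$ in $H^{2n}(\C P^n)$.

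Finally I would identify and compute $\alpha$. Comparing the path-loop fibration over $K(\Z,2n)$ with the fibre sequence $W_{n+1,2}\to P_\ell W_{n+1,2}\to \C P^\infty$, as in diagram (\ref{D}), shows that $\alpha$ is the transgression of the generator $y_n\in H^{2n-1}(W_{n+1,2})$ in the Serre spectral sequence (\ref{ssst}). That transgression is computed in Proposition \ref{diffst}: up to sign it is $\phi_n(l_1,l_2)\,x^n$. For $(l_1,l_2)=(1,-1)$ one has $\phi_n(1,-1)=\tfrac{1-(-1)^{n+1}}{2}$, which vanishes precisely when $n$ is odd. Hence $\alpha=0$, so $\xi^*\alpha=0$, the lift exists, and the universal property produces the required $(n-1)$-bundle $\zeta$.

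The step I expect to be the main obstacle is the connectivity bookkeeping that certifies this is the \emph{only} obstruction. One has to verify carefully that $P_\ell W_{n+1,2}\to P_2$ really is $2n$-connected — which rests on $W_{n+1,2}$ being $(2n-2)$-connected — and then marry this with $\dim_\R \C P^n=2n$ to kill all obstructions coming from higher Postnikov stages. Once that is secured, the vanishing of the lone transgression is immediate from Proposition \ref{diffst}, so no genuine computation remains.
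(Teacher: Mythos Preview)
Your proposal is correct and follows essentially the same route as the paper: reduce to the lifting problem of (\ref{lift}) via Theorem \ref{univ}, climb the Postnikov tower to $P_2$ as in (\ref{Post}), use the $2n$-connectivity of $P_\ell W_{n+1,2}\to P_2$ together with $\dim_\R\C P^n=2n$ to isolate the single obstruction $\alpha$, and identify $\alpha$ via (\ref{D}) as the transgression $\phi_n(1,-1)x^n$ from Proposition \ref{diffst}, which vanishes for $n$ odd. Your explicit formula $\phi_n(1,-1)=\tfrac{1-(-1)^{n+1}}{2}$ and the connectivity bookkeeping you flag are exactly the points the paper handles in the same way.
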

 In fact the same arguments as above also proves 
\begin{proposition}\label{proj2}
For any $\ell=(l_1,\ldots,l_k)$ of gcd $1$, there is a $(n-1)$-bundle $\zeta$ over $\C P^n$ such that $\oplus_j \xi^{l_j} \oplus \zeta$ is trivial if and only if $\displaystyle{\sum_{|I|=n}} \ell^I=0$.
\end{proposition}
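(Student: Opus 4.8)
The plan is to repeat the obstruction–theoretic argument used for Proposition \ref{proj}, now with arbitrary $k$ and arbitrary $\ell$, reading off the relevant transgression from Proposition \ref{diffst}. First I would reduce the bundle question to a lifting problem. A rank $(n-1)$ complement $\zeta$ to $\oplus_j \xi^{l_j}$ makes $\oplus_j \xi^{l_j} \oplus \zeta$ trivial of rank $n+k-1$; setting $m=n+k-1$ so that $m-k=n-1$, Theorem \ref{univ} shows that such a $\zeta$ exists precisely when the classifying map $\xi\colon \C P^n \to \C P^\infty$ lifts through the fibration $P_\ell W_{n+k-1,k}\to \C P^\infty$. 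So it suffices to analyse the existence of the dotted arrow in
\begin{equation*}
\xymatrix{ & P_\ell W_{n+k-1,k} \ar[d] \\ \C P^n \ar@{-->}[ur] \ar[r]^{\xi} & \C P^\infty }
\end{equation*}

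Next I would run the Postnikov tower of $P_\ell W_{n+k-1,k}$ exactly as in the passage preceding Proposition \ref{proj}. The fibre $W_{n+k-1,k}$ is $(2n-2)$-connected with $\pi_{2n-1}\cong \Z$, and its lowest-degree cohomology generator is $y_n\in H^{2n-1}(W_{n+k-1,k})$. Hence the first Postnikov section of $P_\ell W_{n+k-1,k}$ is $\C P^\infty$, and the next stage is a fibration $P_2 \to \C P^\infty \xrightarrow{\alpha} K(\Z,2n)$ with $P_\ell W_{n+k-1,k}\to P_2$ a $2n$-connected map. Since $\dim \C P^n = 2n$ and the homotopy fibre of $P_\ell W_{n+k-1,k}\to P_2$ is $(2n-1)$-connected, every higher obstruction to the lift lies in a group $H^{i+1}(\C P^n;\pi_i)$ with $i\geq 2n$, all of which vanish. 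Thus the lift exists if and only if the single primary obstruction $\xi^*\alpha \in H^{2n}(\C P^n)$ vanishes.

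Finally I would identify and compute $\alpha$. Forming the comparison of fibrations analogous to diagram (\ref{D}), with top row $W_{n+k-1,k}\to K(\Z,2n-1)$ classified by $y_n$, one sees that $\alpha$ is the transgression of $y_n$ in the Serre spectral sequence (\ref{ssst}). By Proposition \ref{diffst} this transgression is $d(y_n) = -(-1)^n\big(\sum_{|I|=n}\ell^I\big)x^n$, so $\xi^*\alpha$ is a nonzero scalar multiple of $\big(\sum_{|I|=n}\ell^I\big)x^n$ in $H^{2n}(\C P^n)\cong \Z$. This vanishes exactly when $\sum_{|I|=n}\ell^I = 0$, which is the claim.

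The main obstacle is the Postnikov and obstruction bookkeeping rather than the transgression computation: one must verify that the primary obstruction $\xi^*\alpha$ really is the complete obstruction. This rests on the connectivity statement that $W_{n+k-1,k}$ is $(2n-2)$-connected with $\pi_{2n-1}\cong \Z$, so that the map to $P_2$ is $2n$-connected, combined with the dimension count $\dim \C P^n = 2n$. Granting this, the identification of the $k$-invariant $\alpha$ with the transgression of $y_n$ and its evaluation via Proposition \ref{diffst} are routine, and the whole argument is uniform in $k$, specialising to Proposition \ref{proj} when $k=2$ and $\ell=(1,-1)$.
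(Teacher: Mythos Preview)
Your proposal is correct and follows exactly the approach the paper intends: the paper's proof of Proposition~\ref{proj2} is literally the sentence ``the same arguments as above also prove'', referring to the Postnikov/obstruction argument for Proposition~\ref{proj}. You have simply written out that argument in the general case, correctly replacing $W_{n+1,2}$ by $W_{n+k-1,k}$, noting its $(2n-2)$-connectivity so that the only obstruction on $\C P^n$ is the primary one, and identifying that obstruction with the transgression $d(y_n)$ from Proposition~\ref{diffst}.
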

 
 \subsection{Lens Space}
 Consider the Lens space $L^d(m) = \frac{S^{2d+1}}{\Z_m}$. There is  a quotient map $q_m : L^d(m) \rightarrow \C P^d$. Let $\lambda = q_m^*(\xi)$. Note that $\lambda^m$ is a trivial bundle. Consider the question: if  $\lambda^{l_1}\oplus\lambda^{l_2} \oplus \zeta $ is a trivial bundle, what are the possible restrictions on the rank of $\zeta$? 

The dimension of the lens space $L^d(m)$ equals $2d+1$, thus we can choose $\zeta$ to have dimension $d+1.$ Recall that the cohomology is given by ($u=c_1(\lambda)$)
$$H^*(L^d(m);\Z) \cong \Z[u,v_{2d+1}]/(mu,u^{d+1}, v_{2d+1}^2,uv_{2d+1})$$
We ask whether one can choose $\zeta$ of rank $d$. This is equivalent to the lift 
\begin{equation*}
\xymatrix{ & P_\ell W_{d+2,2} \ar[d] \\
L^d(m) \ar[r]^{\xi}  & \C P^\infty }
\end{equation*}
with $\ell=(l_1,l_2)$. Using the second stage of the tower (\ref{Post}) such an obstruction lies in $H^{2d+2}$ so that it is $0$ for $L^d(m)$. Therefore, the bundle $\zeta$ can be chosen to have rank $d$. The total chern class of $\zeta$ is
$$c(\zeta)=(1+l_1u)^{-1}(1+l_2u)^{-1}$$
This implies $c_d(\zeta) = \phi_d(l_1,l_2) \mbox{ mod } m$. Hence $\mbox{dim}(\zeta) \geq d$ if $m$ does not divide $\phi_d(l_1,l_2).$

For the next result we use some formulas for cohomology with $\Z/2$-coefficients. We note them now. The cohomology of $L^d(m)$ with $\Z/2$ coefficients is 
$$H^*(L^d(m),\Z_2)) \cong \Z[u,v]/(u^{d+1},v^2-\ep u)$$
with $\ep \equiv \frac{m}{2} (mod~2)$. The B\"ockstein homomorphism $\beta : H^1(L^d(m);\Z/2)\to H^2(L^d(m);\Z)$ is given by the formula $\beta(v)= \frac{m}{2}u$. Also we have the formula
$$Sq^2(u^{d-1}v)= (d-1)u^dv$$

Next consider $P_{\ell}W_{d+1,2}$. We have $H^{2d-1}(P_{\ell}W_{d+1,2};\Z/2) = \Z/2$ generated by $y_d,$ if $\phi_d(l_1,l_2) = 0 (mod~2).$ In this case the B\"ockstein 
$$\beta: H^{2d-1}(P_{\ell}W_{d+1,2};\Z/2)\cong \Z/2\{y_d\} \to H^{2d}(P_{\ell}W_{d+1,2};\Z) \cong \Z/{\phi(l_1,l_2)}\{x^d\}$$
is given by
$$\beta(y_d) = \frac{1}{2}\phi_d(l_1,l_2)x^d.$$
In this case also note that $y_{d+1}$ is $0$ in $H^{2d+1}(P_{\ell}W_{d+1,2};\Z/2)$ so that $Sq^2(y_d)=0$. Using these computations we prove 

 \begin{theorem}
 Suppose $d$ is even, $m$ is even, $m$ divides $\phi_d(l_1,l_2)$ and $\nu_2(m)=\nu_2(\phi_d(l_1,l_2)),$ where $\nu_2(n)$ denotes the two adic valuation of $n,$ then $\mbox{dim}(\zeta) \geq d.$
 \end{theorem}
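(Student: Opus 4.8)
The plan is to recast the rank bound as a lifting problem and then to obstruct the lift by naturality of cohomology operations. By Theorem \ref{univ} a complementary bundle $\zeta$ of rank $d-1$ to $\lambda^{l_1}\oplus\lambda^{l_2}$ exists exactly when the classifying map $\lambda\colon L^d(m)\to\C P^\infty$ lifts through $\pi\colon P_\ell W_{d+1,2}\to\C P^\infty$; since any complement of rank $<d-1$ gives one of rank $d-1$ after adding a trivial summand, it suffices to show no such lift exists. First I would note that the elementary obstruction is inconclusive: it is the pullback of the transgression of $y_d$, namely $\phi_d(l_1,l_2)\,u^{d}\in H^{2d}(L^d(m);\Z)\cong\Z/m$, which is exactly $c_d(\zeta)$ and vanishes since $m\mid\phi_d(l_1,l_2)$. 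So a finer argument is needed, and the strategy is to assume a lift $g$ exists and derive a contradiction from the Bockstein and from $Sq^2$.

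Assuming such a $g$, naturality gives $g^*x=u$, and since $H^{2d-1}(L^d(m);\Z/2)\cong\Z/2\{u^{d-1}v\}$ we may write $g^*y_d=c\,u^{d-1}v$ for some $c\in\Z/2$. The key step is to pin down $c=1$ via the \emph{integral} Bockstein. Applying $g^*$ to $\beta(y_d)=\tfrac12\phi_d(l_1,l_2)\,x^d$ and comparing with $\beta(g^*y_d)=c\,\beta(u^{d-1}v)=c\,\tfrac m2u^{d}$ (the latter from $\beta(v)=\tfrac m2u$ together with the Leibniz rule, $u^{d-1}$ being a reduction), I would read the equality in $H^{2d}(L^d(m);\Z)\cong\Z/m$. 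Writing $\phi_d(l_1,l_2)=ms$, the hypothesis $\nu_2(m)=\nu_2(\phi_d(l_1,l_2))$ forces $s$ odd, whence $\tfrac12\phi_d\equiv\tfrac m2\pmod m$; as $\tfrac m2u^{d}$ is the unique element of order two in $\Z/m$, the comparison $\tfrac m2u^d=c\,\tfrac m2u^d$ gives $c=1$, i.e. $g^*y_d=u^{d-1}v\neq0$.

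With $c=1$ in hand I would invoke naturality of $Sq^2$. In $P_\ell W_{d+1,2}$ one has $Sq^2(y_d)=0$ (as $y_{d+1}=0$ in its mod $2$ cohomology), while in the lens space $Sq^2(u^{d-1}v)=(d-1)u^{d}v$. Hence $0=g^*Sq^2(y_d)=Sq^2(g^*y_d)=(d-1)u^{d}v$; since $d$ is even, $(d-1)$ is odd, so this reads $u^{d}v=0$ in $H^{2d+1}(L^d(m);\Z/2)\cong\Z/2$, a contradiction. Therefore no lift exists and $\dim\zeta\geq d$. Note that existence of $g$ as a genuine map already forces all the naturality relations, so the contradiction is self-contained and no explicit Postnikov analysis is required.

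I expect the main difficulty to be the middle step, extracting $c=1$ from the arithmetic hypothesis. The subtlety is that the mod $2$ Bockstein $Sq^1$ is too coarse to detect $c$ once $\nu_2(m)\geq2$ (both sides then vanish), so one must work integrally and track $\tfrac12\phi_d\,x^d$ modulo $m$; it is precisely the condition $\nu_2(m)=\nu_2(\phi_d(l_1,l_2))$ that makes $\tfrac12\phi_d$ an odd multiple of $\tfrac m2$ and so certifies $g^*y_d\neq0$. Everything after that is naturality applied to the operation formulas already recorded.
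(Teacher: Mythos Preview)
Your proposal is correct and follows essentially the same argument as the paper: assume a lift $g$ exists, use the integral Bockstein together with the hypothesis $\nu_2(m)=\nu_2(\phi_d(l_1,l_2))$ to force $g^*y_d=u^{d-1}v$, and then derive a contradiction from naturality of $Sq^2$ using $Sq^2(y_d)=0$ in $P_\ell W_{d+1,2}$ while $Sq^2(u^{d-1}v)=(d-1)u^dv\neq 0$ for $d$ even. The only cosmetic difference is that the paper phrases the Bockstein step as ``$\beta$ is injective on $H^{2d-1}$'' rather than ``solve for the coefficient $c$'', but these are the same observation.
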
 
  
\begin{proof}
 Suppose $\mbox{dim}(\zeta) = d-1,$ then there exists $f : L^d(m) \rightarrow P_{\ell}W_{d+1,2}$ such that $f^*(x)=c_1(\lambda)=u$.  We have
$$\beta(y_d)=\frac{1}{2}\phi_d(l_1,l_2)x^d$$
$$\implies \beta(f^*(y_d))= \beta (\frac{1}{2}\phi_d(l_1,l_2)x^d)=\frac{\phi_d(l_1,l_2)}{2}u^d = \frac{m}{2} u^d$$
The last equality follows as $m$ divides $\phi_d(l_1,l_2)$ and $\nu_2(m)=\nu_2(\phi_d(l_1,l_2)).$
Note that $\beta: H^{2d-1}(L^d(m);\Z/2) \to H^{2d}(L^d(m);\Z)$ is injective and hence $f^*(y_d)=u^{d-1}v$.   Then,
$$
\xymatrix{ 
 H^{2d-1}( P_{\ell}W_{d+1,2};\Z/2) \ar[r]^{f^*} \ar[d]^{Sq^2} & H^{2d-1}(L^d(m);\Z/2)\ar[d]^{Sq^2} \\
H^{2d+1}( P_{\ell}W_{d+1,2};\Z/2) \ar[r]^{f^*}    & H^{2d+1}(L^d(m);\Z/2)}$$
implies $Sq^2f^*(y_d) = f^*Sq^2(y_d)$ but  $Sq^2f^*(y_d) = Sq^2u^{d-1}v = u^dv \neq 0$ as $d$ is even. However $f^*Sq^2(y_d) = f^*(0) =0,$ which leads to a contradiction. Hence it follows that $\mbox{dim}(\zeta) \geq d.$
\end{proof}

 \section{Applications}

In this section we consider applications of the cohomology computations in Section \ref{coh}. We write down the characteristic classes of the tangent bundle and the stable normal bundle of the manifold $P_\ell W_{n,2}$ and deduce bounds on the span and immersion codimension under some mild hypothesis.  

 The dimension of the manifold $P_\ell W_{n,2}$ is $4n-5$.  Note the expression for the tangent bundle for $P_\ell W_{n,2}$ from 2.2 of \cite{Gon-Su}. 
 $$  \tau(P_{\ell}W_{n,2}) \cong \xi^{-l_1}\otimes \xi^{l_2} \oplus \xi ^{-l_1} \otimes \beta \oplus \xi^{-l_2} \otimes \beta \oplus \ep_{\R} $$
In this expression $\xi$ is the line bundle associated to the principle $S^1$ bundle $ W_{n,2} \to P_{\ell}(W_{n,2})$ and $\beta$ satisfies $ \xi^{l_1} \oplus \xi^{l_2} \oplus \beta \cong \ep_{\R}^n.$ Eliminating the bundle $\beta$ from the equation above one has the following expression from Lemma 2.1 of \cite{Gon-Su}
 $$  \tau(P_{\ell}W_{n,2}) \oplus \xi^{-l_2}\otimes \xi^{l_1}\oplus 3\ep_{\R} \cong n ( \xi^{-l_1} \oplus \xi^{-l_2} ) $$
Observe that the first Chern class of the line bundle $\xi$ equals the class $x$ defined in section \ref{coh}. It follows that the total Pontrjagin class of the tangent bundle is given by (modulo $2$-torsion)
\begin{equation}\label{A1} 
 p(\tau(P_{\ell}W_{n,2}) ) = (1-l_1^2x^2)^n(1-l_2^2x^2)^n(1-(l_2-l_1)^2x^2)^{-1}
\end{equation}
Thus the Pontrjagin classes lie in the subalgebra of $H^*(P_\ell W_{n,2})$ generated by $x$. We have the following result on the span of these manifolds.

\begin{theorem}\label{span}
For any $\ell=(l_1,l_2)$ such that there is a prime $p$ dividing $n$ but not $l_2-l_1$, the span of $P_\ell W_{n,2}$ is $\leq 4n-5 - 2[\frac{n-2}{2}]$. In addition for $n$ odd  if $p$ divides $l_1^n - l_2^n$ then the span of $P_\ell W_{n,2}$ is $\leq 3n-4$.
\end{theorem}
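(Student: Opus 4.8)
The plan is to derive the upper bound from the nonvanishing of a single well-chosen Pontrjagin class. Recall the standard obstruction: if $P_\ell W_{n,2}$ carries $s$ everywhere linearly independent vector fields, its tangent bundle splits as $\tau \cong \eta \oplus s\,\ep_\R$ with $\mathrm{rank}(\eta)=4n-5-s$, and since $p_i(\eta)=(-1)^i c_{2i}(\eta\otimes\C)$ vanishes as soon as $2i>\mathrm{rank}(\eta)$, any nonzero class $p_i(\tau)$ forces $s\leq 4n-5-2i$. Hence it suffices to exhibit a nonzero $p_i(\tau)$ with $i=[\frac{n-2}{2}]$ for the first assertion, and with $i=\frac{n-1}{2}$ (an integer, as $n$ is odd) for the second, whereupon $4n-5-(n-1)=3n-4$.

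First I would fix an odd prime $p$ dividing $n$ but not $l_2-l_1$ and work with $\Z/p$ coefficients, where by Theorem \ref{cohp} the Pontrjagin classes lie in the truncated polynomial subalgebra $\Z/p[x]/(x^N)$ with $N=\min_{r>n-2}\{\phi_r(l_1,l_2)\not\equiv 0\}\geq n-1$. In particular $x^{n-2}\neq 0$, so the class needed in the first assertion sits below the truncation; in the second assertion the extra hypothesis $p\mid l_1^n-l_2^n$ forces $\phi_{n-1}\equiv 0$, hence $N\geq n$ and $x^{n-1}\neq 0$ as well. Reducing (\ref{A1}) modulo $p$ — legitimate since (\ref{A1}) is stated modulo $2$-torsion and $p$ is odd — gives
$$p(\tau)\equiv (1-l_1^2x^2)^n(1-l_2^2x^2)^n\bigl(1-(l_2-l_1)^2x^2\bigr)^{-1}\pmod p,$$
in which $1-(l_2-l_1)^2x^2$ is invertible precisely because $p\nmid(l_2-l_1)$.

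The core computation is to read off the coefficient of the top surviving power $x^{2i}$. Here I would invoke the Frobenius congruence $(1-l_j^2x^2)^n\equiv (1-l_j^{2p}x^{2p})^{n/p}\pmod p$ (valid since $p\mid n$, and iterated according to the $p$-adic valuation of $n$), which collapses the numerator to a polynomial in $x^{2p}$; expanding the factor $\bigl(1-(l_2-l_1)^2x^2\bigr)^{-1}$ as a geometric series and extracting the coefficient of $x^{2i}$ leaves a finite sum whose surviving terms are controlled by binomial coefficients in $n/p$ via Lucas' theorem. I expect \emph{this last nonvanishing to be the main obstacle}: one must show that, after all cancellations, the resulting coefficient is a unit in $\Z/p$. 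This is where the hypotheses enter decisively, since $p\mid n$ is what produces the Frobenius collapse while $p\nmid(l_2-l_1)$ keeps the geometric series — and hence the surviving $t=0$ contribution $(l_2-l_1)^{2i}$ — nonzero. The delicate point is the borderline case $\phi_{n-1}\equiv 0$, where the highest term may itself be proportional to $\phi_{n-1}$ and vanish; there one either isolates the surviving unit among the lower-order terms or, for $n$ odd, simply appeals to the stronger second bound, which subsumes the first.

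For the second assertion the argument is identical but run one degree higher: the hypothesis $p\mid l_1^n-l_2^n$ opens up the slot $x^{n-1}$ (as $N\geq n$), and the same extraction — now at $i=\frac{n-1}{2}$ — produces the improved estimate $3n-4$. Finally, feeding the two values of $i$ into $\mathrm{span}\leq 4n-5-2i$ yields both bounds; when the only prime available in the first assertion is $p=2$ one runs the same scheme against the mod $2$ description of Theorem \ref{coh2} in place of Theorem \ref{cohp}, reading the Pontrjagin classes off integrally to avoid the loss of $2$-torsion.
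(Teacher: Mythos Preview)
Your approach coincides with the paper's: bound the span by exhibiting $p_i(\tau)\neq 0$ in $H^*(P_\ell W_{n,2};\Z/p)$ for $i=[\tfrac{n-2}{2}]$ (respectively $i=\tfrac{n-1}{2}$ under the extra hypothesis), reading $p(\tau)$ from (\ref{A1}) and the truncation level $N$ from Theorem~\ref{cohp}. The divergence is only in the coefficient extraction. The paper does not iterate Frobenius and invoke Lucas; it simply asserts that, since $p\mid n$, the factor $(1-l_1^2x^2)^n(1-l_2^2x^2)^n$ reduces to $1$ modulo $p$ and $x^n$, so that $p(\tau)\equiv(1-(l_2-l_1)^2x^2)^{-1}$ is a pure geometric series and the coefficient of $x^{2i}$ is the unit $(l_2-l_1)^{2i}$. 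That one line is the paper's resolution of what you flag as the ``main obstacle''; the second assertion then follows identically once $p\mid l_1^n-l_2^n$ (equivalently $\phi_{n-1}\equiv 0$) forces $N\geq n$, so that $x^{n-1}$ survives.

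Your hesitation is in fact justified. The reduction $(1-l_j^2x^2)^n\equiv 1\pmod{p,x^n}$ is literally correct only when the Frobenius collapse pushes every nontrivial term past $x^n$, i.e.\ when $2p^a\geq n$ for $p^a\Vert n$, hence $n=p^a$ or $2p^a$. Otherwise the numerator contributes terms in $x^{2p^a},x^{4p^a},\ldots$ below the truncation and can cancel the geometric-series contribution: for $n=12$, $p=3$, $\ell=(3,1)$ one finds $p(\tau)\equiv(1-x^2)^{11}\pmod 3$ and hence $p_5(\tau)\equiv 0$, although the integral class is nonzero. So neither your Lucas sketch nor the paper's shortcut actually settles the mod-$p$ nonvanishing in full generality; what ultimately rescues such cases is the integral computation you allude to. Your closing aside about $p=2$ and Theorem~\ref{coh2} is extraneous to the comparison --- the paper makes no separate provision for the even prime.
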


\begin{proof}
We use that if the span of a vector bundle $\gamma$ is $k$ then the Pontrjagin classes $p_i(\gamma)$ are $0$ for $i > [\frac{\dim(\gamma)-k}{2}]$. In the spectral sequence for $H^*(P_\ell W_{n,2};\Z)$ of section \ref{coh} the first differential onto a power of $x$ possibly falls on the element $x^{n-1}$.  Therefore the powers $x^i$ are non-trivial for $i\leq n-2$.   

Suppose there is a prime $p$ which divides $n$ but not $l_2 - l_1$. Then the expression (\ref{A1}) modulo $p$ and $x^n$ is the same as $(1-(l_2-l_1)^2x^2)^{-1}$ which has non zero coefficient (modulo $p$) for every even power of $x$. Thus the coefficient of $x^{2[\frac{n-2}{2}]}$ is non-zero implying the first part.

 For $n$ odd write $n-1=2k$ and consider the possibility that $p_k(\tau(P_{\ell}W_{n,2})) \neq 0~ (mod~p)$. The expression  $(1-(l_2-l_1)^2x^2)^{-1}$  has a non-zero coefficient of $x^{n-1}$. Therefore $p_k(\tau(P_{\ell}W_{n,2})$ is non-zero if the class $x^{n-1}$ is non-zero in $H^*(P_\ell W_{n,2};\Z/p)$ which in turn is equivalent to the condition $\phi_{n-1}(l_1,l_2)=0 (mod~p)$. Note that $\phi_{n-1}(l_1,l_2)= \frac{l_1^n-l_2^n}{l_1-l_2}$. Hence the result follows. 
\end{proof}

\begin{rem}
Note that the second condition if easily satisfied (for example if $p-1$ divides $n$ and $p$ does not divide any $l_i$). There may be other similar results as the above. For example a similar argument demonstrates that if a prime $p$ divides $n-1$ but not $l_1-l_2$, $l_1$ or $l_1+3l_2$ the first conclusion holds. If in addition $p$ divides $l_1^n-l_2^n$ the second condition holds. One may make similar computations with $p$ dividing $n-2$ and so on. Thus it possible to write down many sets of divisibility relations for $l_1$ and $l_2$ which imply the first consequence, and in addition if the prime divides $l_1^n-l_2^n$ without dividing $l_1-l_2$ then the second consequence also follows.
\end{rem}
 
\mbox{ } \\

Next we consider the problem of immersing the manifold $P_\ell W_{n,2}$ in Euclidean space. If $P_{\ell}W_{n,2}$ is immersed in $\mathbb{R}^N$ for some $N$ then we have
$$\tau \oplus \nu \cong \ep_{\R}^N$$ 
where $\nu$ is the normal bundle. The total Pontrjagin classes modulo elements of order 2, satisfy $p(\nu) = p(\tau(P_{\ell}W_{n,2}))^{-1}.$  From (\ref{A1}) it follows that  
\begin{equation}\label{A2}
p(\nu) = (1-l_1^2x^2)^{-n}(1-l_2^2x^2)^{-n}(1-(l_2-l_1)^2x^2)
\end{equation}
We may prove the theorem
\begin{theorem}\label{imm}
Suppose that there exists a prime $p$ dividing $n-1$ and $l_2-l_1$. Then the class $p_{[\frac{n-3}{2}]}(\nu)$ is non-zero. Hence the manifold $P_lW_{n,2}$ does not immerse in $\R^{4n-5+2[\frac{n-3}{2}]}$.  
\end{theorem}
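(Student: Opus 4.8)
The plan is to read off the required Pontrjagin class directly from the expression (\ref{A2}) after reducing modulo the prime $p$, and then to check that the relevant power of $x$ survives in $H^*(P_\ell W_{n,2};\Z/p)$. Reducing (\ref{A2}) modulo $p$ is legitimate since it is defined modulo $2$-torsion (I take $p$ odd). Because $p$ divides $l_2-l_1$ we have $l_1\equiv l_2\pmod p$, so $(l_2-l_1)^2\equiv 0$ and the two repeated factors coincide; thus (\ref{A2}) collapses to
$$p(\nu)\equiv (1-l_1^2x^2)^{-2n}\pmod p.$$
The coefficient of $x^{2j}$ in this series is $\binom{2n+j-1}{j}\,l_1^{2j}$, so that $p_j(\nu)\equiv \binom{2n+j-1}{j}\,l_1^{2j}\,x^{2j}$ for every $j$, where I set $j=[\frac{n-3}{2}]$. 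Since $\gcd(l_1,l_2)=1$ and $p\mid l_2-l_1$, the prime $p$ cannot divide $l_1$, so $l_1^{2j}$ is a unit. It therefore suffices to establish two facts: that $x^{2j}\neq 0$ in $H^*(P_\ell W_{n,2};\Z/p)$, and that $\binom{2n+j-1}{j}$ is nonzero modulo $p$.

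For the first fact I would invoke Theorem \ref{cohp} with $k=2$. Since $l_1\equiv l_2\pmod p$, one has $\phi_r(l_1,l_2)\equiv (r+1)\,l_1^r\pmod p$, and because $p\mid n-1$ the value $r=n-1$ gives $\phi_{n-1}\equiv n\,l_1^{n-1}\equiv l_1^{n-1}\not\equiv 0$. Hence $N=n-1$ in the notation of Theorem \ref{cohp}, so that $x^{n-2}\neq 0$ while $x^{n-1}=0$. As $2j=2[\frac{n-3}{2}]\leq n-3<n-1$, the class $x^{2j}$ is nonzero, as required.

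The heart of the argument, and the step I expect to be the main obstacle, is the nonvanishing of $\binom{2n+j-1}{j}$ modulo $p$. Here I would again use $p\mid n-1$ to write $n=1+sp$ and apply the Frobenius identity $(1-t)^{p}\equiv 1-t^{p}\pmod p$ to split
$$(1-t)^{-2n}\equiv (1-t)^{-2}\,(1-t^{p})^{-2s}\pmod p,\qquad t=l_1^2x^2.$$
Extracting the coefficient of $t^{j}$ then expresses $\binom{2n+j-1}{j}$ modulo $p$ as $\sum_{a+pb=j}(a+1)\binom{2s+b-1}{b}$, equivalently (via Lucas' theorem) in terms of the base-$p$ digits of $2n+j-1$ and $j$. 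In the range $2j<p$ only the term $b=0$ survives and the coefficient reduces to $j+1=[\frac{n-1}{2}]$, which I must confirm is a unit modulo $p$; in general one must control the carries arising when $j$ is added to $2n-1$ in base $p$. Carrying out this arithmetic nonvanishing under the stated divisibility hypotheses is the crux of the proof.

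Finally, granting $p_{[\frac{n-3}{2}]}(\nu)\neq 0$, the non-immersion statement follows from the standard Pontrjagin obstruction. An immersion of $P_\ell W_{n,2}$, a manifold of dimension $4n-5$, in $\R^N$ produces a normal bundle $\nu$ of rank $N-(4n-5)$ with $p(\nu)=p(\tau)^{-1}$ modulo $2$-torsion, and $p_i(\nu)=0$ whenever $2i>\operatorname{rank}\nu$ (since $p_i(\nu)=(-1)^ic_{2i}(\nu\otimes\C)$). Nonvanishing of $p_{[\frac{n-3}{2}]}(\nu)$ therefore forces $N-(4n-5)\geq 2[\frac{n-3}{2}]$, which yields the asserted bound on the immersion codimension.
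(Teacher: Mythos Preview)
Your strategy coincides with the paper's: reduce (\ref{A2}) modulo the odd prime $p$ and read off the coefficient of $x^{2j}$, $j=[\frac{n-3}{2}]$. You are in fact more careful than the paper at the key step. The paper asserts $p(\nu)\equiv(1-l_1^2x^2)^{-2}\pmod{p,x^n}$ and then writes the coefficient of $x^{2j}$ as $\binom{-2}{2j}=\pm(2j+1)$, equal to $n-2$ or $n-3$. That computation treats $x$ rather than $x^2$ as the expansion variable; the correct coefficient of $x^{2j}$ in $(1-l_1^2x^2)^{-2}$ is $(j+1)\,l_1^{2j}$, exactly what your Frobenius/Lucas reduction gives in the range $j<p$ (not $2j<p$: the condition for only $b=0$ to survive in $\sum_{a+pb=j}$ is $p>j$).

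Here lies a genuine gap, which the paper's slip conceals. For $n$ odd one has $j+1=\frac{n-1}{2}$, and since $p$ is odd with $p\mid n-1$ this is divisible by $p$; so the mod $p$ coefficient vanishes and the argument collapses. Concretely, for $n=7$, $p=3$, $(l_1,l_2)=(1,4)$ one gets $p(\nu)\equiv(1-x^2)^{-14}\pmod3$ and the coefficient of $x^4$ is $\binom{15}{2}=105\equiv0\pmod3$, so $p_2(\nu)$ is invisible mod $3$. (Integrally $p_2(\nu)=6909\,x^4\neq0$, but the mod $p$ method does not detect it.) For $n$ even the computation does succeed when $j<p$, since then $j+1=\frac{n-2}{2}$ and $p\mid n-1$ forces $p\nmid n-2$; outside that regime, and for all odd $n$, the ``crux'' you flagged cannot be completed under the stated hypotheses alone. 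Your cohomological step ($N=n-1$, hence $x^{2j}\neq0$) and the final Pontrjagin obstruction are fine.
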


\begin{proof}
We compute $p(\nu)$ modulo $p$ and $x^n$ as in Theorem \ref{span}. Reducing (\ref{A2}) modulo $p$ and $x^n$ we get
$$p(\nu) = (1-l_1^2x^2)^{-2} (mod~p, x^n)$$
 The coefficient of $x^{2[\frac{n-3}{2}]}$ in this expression is $\binom{-2}{2[\frac{n-3}{2}]}=\pm (2[\frac{n-3}{2}]+1)$. This equals $n-2$ or $n-3$ none of which are divisible by $p$ as $p$ divides $n-1$.  
\end{proof}

\end{document}